\newcolumntype{L}[1]{>{\raggedright\let\newline\\\arraybackslash\hspace{0pt}}m{#1}}
\newcolumntype{C}[1]{>{\centering\let\newline\\\arraybackslash\hspace{0pt}}m{#1}}
\newcolumntype{R}[1]{>{\raggedleft\let\newline\\\arraybackslash\hspace{0pt}}m{#1}}
\def\ps@pprintTitle{%
\let\@oddhead\@empty
\let\@evenhead\@empty
\def\@oddfoot{\centerline{\thepage}}%
\let\@evenfoot\@oddfoot}
\newtheorem{theorem}{Theorem}[section]
\newtheorem{lemma}[theorem]{Lemma}
\newtheorem{proposition}[theorem]{Proposition}
\newtheorem{definition}[theorem]{Definition}
\newtheorem{conjecture}[theorem]{Conjecture}
\begin{document}
\pagecolor{white}

\begin{frontmatter}
\title{Closed geodesics on doubled polygons}
\author{Ian M Adelstein and Adam YW Fong} 
\address{Department of Mathematics, Yale University \\ New Haven, CT 06520 United States}
\address{Department of Mathematics, Trinity College\\ Hartford, CT 06106 United States}
\begin{abstract} In this paper we study 1/k-geodesics, those closed geodesics that minimize on any subinterval of length $L/k$, where $L$ is the length of the geodesic. We investigate the existence and behavior of these curves on doubled polygons and show that every doubled regular $n$-gon admits a $1/2n$-geodesic. For the doubled regular $p$-gons, with $p$ an odd prime, we conjecture that $k=2p$ is the minimum value for $k$ such that the space admits a $1/k$-geodesic. 
\end{abstract}
\begin{keyword} closed geodesics, regular polygons, billiard paths
\MSC[2010]  53C20 \sep 53C22
\end{keyword}
\end{frontmatter}

\section{Introduction}

Traders and explorers have long sought shorter paths across our globe. Columbus in the fifteenth century thought it was possible to reach the East by sailing west. Alas, a continent stood in the way, and in the nineteenth century many explorers searched for the elusive Northwest Passage, a sea route connecting the Atlantic and Pacific via the Arctic Ocean. With the advent of air travel more direct routes became possible; planes often follow the shortest path between two points on the globe. In flat Euclidean space (like the $xy$-plane) the shortest path between any two points is a straight line. On a sphere the shortest paths are great circles:~those curves of intersection between the surface of the sphere and a plane containing its center. This is why when you fly between cities in the northern hemisphere your route travels north towards the pole (see Figure~\ref{fig:sphere}).

A geodesic is a locally length minimizing curve; it is the shortest path between any pair of sufficiently close points on the curve. In flat Euclidean space the geodesics are straight lines. We note that these geodesics are not only locally length minimizing, but also globally length minimizing; the straight line is the shortest path between any pair of points on the line, regardless of how close they are. In this paper we study geodesics that fail to minimize globally. As a first example of such a curve consider the geodesic in Figure~\ref{fig:local}. Another important class of geodesics that fail to minimize globally are the closed geodesics, those geodesics that close up on themselves after finite time.

\begin{definition}
We use the symbol $S^1$ to denote the circle. A closed geodesic is a map $\gamma \colon S^1 \rightarrow M $ that is locally length minimizing at every $t \in S^1$.
\end{definition}

%figure 1
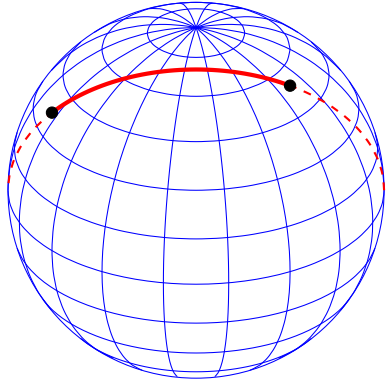
\begin{figure} 
	\begin{center}
		\begin{tikzpicture}[scale=2.5]
			\draw[blue] (0,0) circle (1cm);	
		
			%lattitute
			\draw[variable=\x, domain=210:330, smooth, blue]
				 plot ({cos(-3*90/6)*cos(\x)},
				 	{sin(30)*cos(-3*90/6)*sin(\x)
						+cos(30)*sin(-3*(90/6))
					});
			\draw[variable=\x, domain=200:340, smooth, blue]
				 plot ({cos(-2*90/6)*cos(\x)},
				 	{sin(30)*cos(-2*90/6)*sin(\x)
						+cos(30)*sin(-2*(90/6))
					});
			\draw[variable=\x, domain=190:350, smooth, blue]
				 plot ({cos(-1*90/6)*cos(\x)},
				 	{sin(30)*cos(-1*90/6)*sin(\x)
						+cos(30)*sin(-1*(90/6))
					});
			\draw[variable=\x, domain=180:360, smooth, blue]
				 plot ({cos(0*90/6)*cos(\x)},
				 	{sin(30)*cos(0*90/6)*sin(\x)
						+cos(30)*sin(0*(90/6))
					});
			\draw[variable=\x, domain=170:370, smooth, blue]
				 plot ({cos(1*90/6)*cos(\x)},
				 	{sin(30)*cos(1*90/6)*sin(\x)
						+cos(30)*sin(1*(90/6))
					});
			\draw[variable=\x, domain=160:380, smooth, blue]
				 plot ({cos(2*90/6)*cos(\x)},
				 	{sin(30)*cos(2*90/6)*sin(\x)
						+cos(30)*sin(2*(90/6))
					});
			\draw[variable=\x, domain=150:390, smooth, blue]
				 plot ({cos(3*90/6)*cos(\x)},
				 	{sin(30)*cos(3*90/6)*sin(\x)
						+cos(30)*sin(3*(90/6))
					});
			\draw[variable=\x, domain=0:360, smooth, blue]
				 plot ({cos(4*90/6)*cos(\x)},
				 	{sin(30)*cos(4*90/6)*sin(\x)
						+cos(30)*sin(4*(90/6))
					});
			\draw[variable=\x, domain=0:360, smooth, blue]
				 plot ({cos(5*90/6)*cos(\x)},
				 	{sin(30)*cos(5*90/6)*sin(\x)
						+cos(30)*sin(5*(90/6))
					});
		
			%longitude
			\draw[variable=\x, domain=0:180, smooth, blue]
				plot ({cos(\x)*cos(20*0)}, 
					{sin(\x)*cos(30)+sin(30)*(-cos(\x)*sin(20*0))});				
			\draw[variable=\x, domain=-35:145, smooth, blue]
				plot ({cos(\x)*cos(20*1)}, 
					{sin(\x)*cos(30)+sin(30)*(-cos(\x)*sin(20*1))});
			\draw[variable=\x, domain=-55:135, smooth, blue]
				plot ({cos(\x)*cos(20*2)}, 
					{sin(\x)*cos(30)+sin(30)*(-cos(\x)*sin(20*2))});
			\draw[variable=\x, domain=-55:120, smooth, blue]
				plot ({cos(\x)*cos(20*3)}, 
					{sin(\x)*cos(30)+sin(30)*(-cos(\x)*sin(20*3))});
			\draw[variable=\x, domain=-55:120, smooth, blue]
				plot ({cos(\x)*cos(20*4)}, 
					{sin(\x)*cos(30)+sin(30)*(-cos(\x)*sin(20*4))});			
			\draw[variable=\x, domain=-35:145, smooth, blue]
				plot ({cos(\x)*cos(20*8)}, 
					{sin(\x)*cos(30)+sin(30)*(-cos(\x)*sin(20*8))});
			\draw[variable=\x, domain=-55:135, smooth, blue]
				plot ({cos(\x)*cos(20*7)}, 
					{sin(\x)*cos(30)+sin(30)*(-cos(\x)*sin(20*7))});
			\draw[variable=\x, domain=-55:120, smooth, blue]
				plot ({cos(\x)*cos(20*6)}, 
					{sin(\x)*cos(30)+sin(30)*(-cos(\x)*sin(20*6))});
			\draw[variable=\x, domain=-55:120, smooth, blue]
				plot ({cos(\x)*cos(20*5)}, 
					{sin(\x)*cos(30)+sin(30)*(-cos(\x)*sin(20*5))});
			
			%geodesic		
			\draw[variable=\x, domain=0:180, smooth, thick, red, dashed] 
				plot ({cos(\x)},{sin(40)*sin(\x)});
			\draw[variable=\x, domain=60:140, smooth, ultra thick, red] 
				plot ({cos(\x)},{sin(40)*sin(\x)});
			\draw[fill] ({cos(60)}, {sin(40)*sin(60)}) circle (.3mm);
			\draw[fill] ({cos(140)}, {sin(40)*sin(140)}) circle (.3mm);	
		\end{tikzpicture}
	\end{center}
	
	\caption{Great circle on a sphere showing the shortest path.}
	\label{fig:sphere}
\end{figure}

%figure 2
\begin{figure}
	\begin{center}
		\begin{tikzpicture}[scale=1.6]
			\draw (-1,0) -- (-1,2);
			\draw (1,0) -- (1,2);
			\draw[variable=\x, domain=180:360, smooth]
				plot({cos(\x)},{sin(\x)*cos(70)});
			\draw[variable=\x, domain=0:360, smooth]
				plot({cos(\x)},{sin(\x)*cos(70)+2});
			\draw[variable=\x, domain=1.35:2.5, smooth, dashed]
				plot( {sin(\x*180)},
					{-cos(\x*180)*sin(20)+\x*cos(70)});
			\draw[variable=\x, domain=3.5:4.1, smooth, dashed]
				plot( {sin(\x*180)},
					{-cos(\x*180)*sin(20)+\x*cos(70)});
			\draw[variable=\x, domain=3.5:3.8, smooth, ultra thick, red]
				plot( {sin(\x*180)},
					{-cos(\x*180)*sin(20)+\x*cos(70)});
			\draw[variable=\x, domain=2:2.5, smooth, ultra thick, red]
				plot( {sin(\x*180)},
					{-cos(\x*180)*sin(20)+\x*cos(70)});
			\draw[variable=\x, domain=2.5:3.5, smooth, ultra thick, red, dashed]
				plot( {sin(\x*180)},
					{-cos(\x*180)*sin(20)+\x*cos(70)});
			\draw[fill=black] ( {sin(2*180)},
					{-cos(2*180)*sin(20)+2*cos(70)}) 
					circle (.6mm) node[below right] {$A$};
			\draw[fill=black] ( {sin(3.8*180)},
					{-cos(3.8*180)*sin(20)+3.8*cos(70)}) 
					circle (.6mm) node[below right] {$B$};
		\end{tikzpicture}
	\end{center}
	\caption{Geodesic on the cylinder that is not the shortest path between points $A$ and $B$.}
\label{fig:local}
\end{figure}
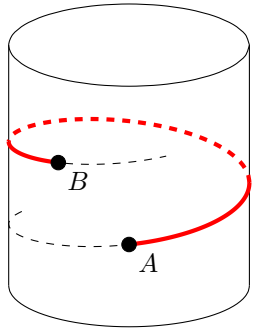

The great circles on the sphere are examples of closed geodesics. Fixing any point on the curve, the great circle is the shortest path to every other point on the circle up to its antipodal point, halfway along the length of the curve. If we traverse past the antipodal point, then a shorter path can be found by traversing the circle in the opposite direction, demonstrating that the great circles are not globally length minimizing. Indeed, every closed geodesic fails to be globally length minimizing, as traversing in the opposite direction always guarantees a shorter path to points beyond the halfway point. 

%figure 3
\begin{figure}
	\begin{center}
		\begin{tikzpicture}[scale=1.2]
			\draw[thick] (0,0) rectangle (6,4);
			\draw[red, thick] (0,0) -- (3,4);
			\draw[red, thick] (3,0) -- (6,4);
			\draw[fill=black] (3,4) circle (.8mm) node[below right] {$q$};
			\draw[fill=black] (3,0) circle (.8mm) node[above left] {$q$};
			\draw[green!60!black, dashed, thick] (1.5,2) -- (4.5,2);
			\draw[blue, dotted, thick, variable=\t, domain=0:1.91] plot (\t+1.5, {2-3/4*(\t)});
			\draw[fill=black] (1.5,2) circle (.8mm) node[above left] {$p$};
			\draw[fill=black] (4.5,2) circle (.8mm) node[below right] {$s$};
		\end{tikzpicture}
	\end{center}
	\caption{Closed geodesic on a flat torus.}
	\label{fig:torus}
\end{figure}
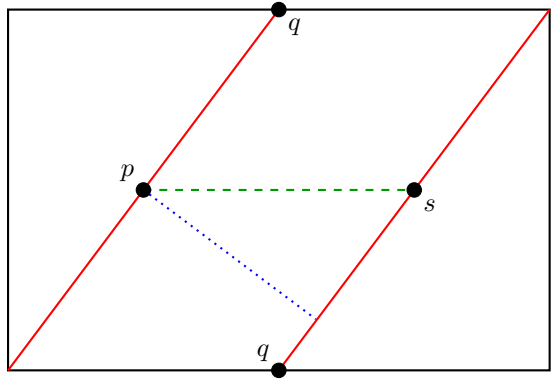

It is not the case that a closed geodesic will always be the shortest path between pairs of points halfway along the curve.  In Figure~\ref{fig:torus} we see an example of a closed geodesic on a flat torus (the red curve) which does not minimize between pairs of points that are half the length apart. Indeed, the green (dashed) curve provides a shorter path between $p$ and $s$. Logically, this poses the question of the largest interval on which a given closed geodesic minimizes. To examine this, Sormani introduced the notion of a $1/k$-geodesic \cite[Definition3.1]{Sor}.

\begin{definition}A $1/k$-geodesic is a constant speed closed geodesic $\gamma \colon S^1 \to M$ which minimizes on all subintervals of length $L/k$, where $L$ is the length of the geodesic and $k \in \mathbb{N}$.
\end{definition}

Note that the great circles on the sphere are $1/2$-geodesics, or half-geodesics. The curve in Figure~\ref{fig:torus} is a $1/4$-geodesic, as it minimizes between all points at length $L/4$ (for example, between the points $p$ and $q$). The curve does not minimize beyond points at length $L/4$, as is evidenced by the blue (dotted) curve between $p$ and a point on the geodesic beyond $q$. See also \cite{Ade}, \cite{ade2}, \cite{wkh}, and \cite{Sor} for more on $1/k$-geodesics. An important first fact about $1/k$-geodesics is that they are as ubiquitous as closed geodesics.

\begin{proposition}[\cite{Sor}, Theorem 3.1] Every closed geodesic is a $1/k$-geodesic for some $k \geq 2$
\end{proposition}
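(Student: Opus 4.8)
The plan is to promote the pointwise local-minimization property of $\gamma$ to a uniform scale using compactness of $S^1$, and then to pick $k$ large enough. Parametrize so that $\gamma$ has unit speed, i.e.\ $S^1=\mathbb R/L\mathbb Z$ and $\gamma$ has length $L$. The first ingredient I would record is that minimization descends to subintervals: if $\gamma|_{[c,d]}$ is a minimizing segment and $[a,b]\subseteq[c,d]$, then $\gamma|_{[a,b]}$ is minimizing. This is immediate from the triangle inequality — writing $d(\gamma(c),\gamma(d))=L(\gamma|_{[c,a]})+L(\gamma|_{[a,b]})+L(\gamma|_{[b,d]})$ and bounding the right-hand side below by $d(\gamma(c),\gamma(a))+d(\gamma(a),\gamma(b))+d(\gamma(b),\gamma(d))\ge d(\gamma(c),\gamma(d))$ forces every piece, in particular $\gamma|_{[a,b]}$, to realize the distance between its endpoints.

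Next I would extract the uniform scale. Since $\gamma$ is locally length minimizing at every $t\in S^1$ (by the definition of a closed geodesic, equivalently the standard fact that geodesics minimize inside a totally normal neighborhood), for each $t$ there is $\varepsilon_t>0$ such that $\gamma|_{[t-2\varepsilon_t,\,t+2\varepsilon_t]}$ is minimizing. The open arcs $U_t=(t-\varepsilon_t,\,t+\varepsilon_t)$ cover the compact set $S^1$, so finitely many $U_{t_1},\dots,U_{t_m}$ suffice; set $\delta=\min_{1\le i\le m}\varepsilon_{t_i}>0$. If $[a,b]\subseteq S^1$ has $b-a\le\delta$, then $a\in U_{t_i}$ for some $i$, whence $[a,b]\subseteq[t_i-2\varepsilon_{t_i},\,t_i+2\varepsilon_{t_i}]$, so $\gamma|_{[a,b]}$ is minimizing by the previous step. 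Now choose $k=\max\{2,\lceil L/\delta\rceil\}\in\mathbb N$; then $k\ge 2$ and $L/k\le\delta$, so $\gamma$ minimizes on every subinterval of length $L/k$, i.e.\ $\gamma$ is a $1/k$-geodesic.

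I do not expect a genuine obstacle: all the content lies in the compactness step that turns the $t$-dependent radii $\varepsilon_t$ into a single $\delta$, which is the routine open-cover (Lebesgue-number-type) argument above. The only points that require a little care are confirming that minimization passes to subintervals — handled by the triangle-inequality splitting — and arranging the final $k$ to be simultaneously a natural number, at least $2$, and large enough that $L/k\le\delta$, which is exactly what the choice $k=\max\{2,\lceil L/\delta\rceil\}$ achieves. It is worth noting that neither completeness nor compactness of $M$ is used; only compactness of the domain circle is needed.
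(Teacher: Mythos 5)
Your proof is correct and follows essentially the same route as the paper: extract a uniform minimizing scale from the pointwise local-minimization property via compactness of $S^1$, then choose $k$ large enough that $L/k$ falls below that scale. The only cosmetic difference is that you replace the paper's appeal to the Lebesgue number of the finite subcover with the radius-doubling trick (covering by $\varepsilon_t$-arcs while minimizing on $2\varepsilon_t$-intervals) and you spell out the elementary fact that minimization passes to subintervals, which the paper leaves implicit.
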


\begin{proof}
 Let $\gamma \colon S^1 \to M$ be a constant speed closed geodesic.  Then by the local length minimization property of $\gamma$ we have for every $t \in S^1=[0,2\pi]$ that there exists an $\epsilon_t > 0$ such that $\gamma$ minimizes on the interval $(t-\epsilon_t, t+\epsilon_t)$. These intervals form an open cover of $S^1$ and by compactness of the circle we can choose a finite subcover. Let $\epsilon$ be the Lebesgue number of the finite subcover, and by the Archimedean property choose $k \geq 2 \pi / \epsilon$. Then $\gamma$ minimizes on all parameter intervals $(t - \pi/k, t+ \pi/k)$ and hence $\gamma$ minimizes on all subintervals of length $L/k$.
\end{proof}

\section{The Over Under-Curve on Doubled Polygons}

We proceed by studying $1/k$-geodesics on doubled regular $n$-gons. We define a doubled regular $n$-gon, denoted $X_n$, to be the metric space obtained by gluing two regular $n$-gons along their common edges. We think of the doubled regular $n$-gons as having a top face and bottom face, so that traversal from one face to the other is possible only by crossing through a point along the shared edges or vertices of the faces. The distance between any two points lying on the same face is the standard Euclidean distance, whereas the distance between two points $x,y \in X_n$ lying on opposite faces is given by $\min_z \{d(x,z) + d(z,y)\}$, where $d$ is the Euclidean distance function on each face and the minimum is taken over all edge points $z \in X_n$.  

We next need to determine the behavior of geodesics on these doubled polygons. On any given face the space is Euclidean and the geodesics are straight lines; if two points are on the same face the straight line path between them is a geodesic. If two points are on opposite faces, a geodesic connecting them must consist of a straight line segment on each face, connected via a shared edge or vertex point. If this geodesic traverses an edge, we can reflect the doubled polygon over this edge, creating a Euclidean space, and conclude that the geodesic on this reflected space must be a straight line. Upon un-reflecting over the edge, we see that the angle of incidence is equal to the angle of reflection, i.e.~that the geodesics billiard around the edges of the doubled polygons, c.f.~\cite{veech}. An application of Heron's solution to the shortest path problem illuminates this billiard behavior. We also have the following lemma.

\begin{lemma}[\cite{Ade}, Lemma 2.1] Geodesics on a doubled regular $n$-gons do not contain vertices as interior points.
\end{lemma}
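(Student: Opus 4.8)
The plan is to exploit the fact that every vertex of $X_n$ is a cone point whose total angle is strictly less than $2\pi$, together with the standard corner-cutting argument showing that a locally length minimizing curve cannot pass through such a point.

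First I would record the local geometry of $X_n$. Away from the vertices the space is flat: an interior edge point has total angle $\pi + \pi = 2\pi$, so a neighborhood of it is isometric to a Euclidean disk, consistent with the billiard description given above. At a vertex $v$, however, each of the two copies of the regular $n$-gon meets $v$ with interior angle $(n-2)\pi/n$, so the total angle of $X_n$ around $v$ is $\theta_n := 2(n-2)\pi/n = 2\pi - 4\pi/n$, and a small metric ball about $v$ is isometric to a Euclidean cone of cone angle $\theta_n$. Since $n \geq 3$ we have $\theta_n < 2\pi$ for every $n$.

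Now suppose toward a contradiction that a geodesic $\gamma$ has $v = \gamma(t_0)$ as an interior point. Choose $\varepsilon > 0$ small enough that $\gamma$ restricted to $(t_0 - \varepsilon, t_0 + \varepsilon)$ lies inside a conical ball about $v$ and meets $v$ only at $t_0$; then the two subarcs $\gamma|_{(t_0 - \varepsilon,\, t_0]}$ and $\gamma|_{[t_0,\, t_0 + \varepsilon)}$ are radial segments of the cone issuing from $v$, dividing the punctured ball into two sectors with angles $\alpha$ and $\beta$ satisfying $\alpha + \beta = \theta_n < 2\pi$, hence $\min\{\alpha,\beta\} < \pi$. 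Developing the smaller sector isometrically into the Euclidean plane carries the two segments to straight segments from the image of $v$ meeting at an angle strictly less than $\pi$; the Euclidean chord joining their far endpoints is strictly shorter than the broken path through $v$, and because the opening angle is below $\pi$ this chord stays inside the developed sector and therefore lifts back to a genuine path in $X_n$. This yields a path shorter than $\gamma$ between two points of $\gamma$ arbitrarily close to $v$, contradicting the local length minimization of $\gamma$; hence no geodesic contains a vertex as an interior point.

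The main point requiring care — more a matter of bookkeeping than a genuine obstacle — is making the corner-cutting step rigorous: confirming that a sufficiently small neighborhood of $v$ is honestly a flat cone (so that "develop into the plane" is legitimate), that on this scale $\gamma$ neither re-enters the neighborhood nor winds around $v$, and that the shortcut chord stays within the sector and within $X_n$. One could equally run the argument by reflecting across the edges incident to $v$ to unfold the two faces into the plane, as in the billiard picture already set up in this section, which may be the cleanest route here.
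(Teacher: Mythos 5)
Your proof is correct and is essentially the paper's argument: both cut the corner at the vertex by unfolding a neighborhood into the plane and applying the (strict) triangle inequality, the paper doing the unfolding by reflecting over an edge adjacent to the vertex and invoking convexity to get a kink of angle less than $\pi$, while you get the same angle bound intrinsically from the cone angle $2\pi - 4\pi/n < 2\pi$ and develop the smaller sector. Your cone-point phrasing in fact makes explicit the angle estimate that the paper only asserts (its ``acute'' should really just be ``less than $\pi$''), so no gap here.
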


\begin{proof} By contradiction assume that the geodesic contains a vertex point. Because regular polygons are convex, we can always reflect the doubled polygon over one of the edges adjacent to the vertex (as in the above paragraph) such that the geodesic in the resulting Euclidean space is kinked with an acute angle. Choosing a pair of geodesic points on either side of the vertex, and considering the triangle formed in the resultant Euclidean space from these two points and the vertex, we conclude via the triangle inequality that there exists a shorter path connecting these points. This contradicts the local length minimizing property of the geodesic at the vertex. 
\end{proof}

The closed geodesics on the doubled regular polygons are interesting to study because of their simplicity. Our research is motivated by the following result:

\begin{proposition}[\cite{Ade}, Proposition 2.5] Let $X_n$ be a doubled regular $n$-gon.
   \begin{enumerate}
   \item If $n$ is odd then $X_n$ has no half-geodesics
   \item If $n$ is even then $X_n$ has exactly $\frac{n}{2}$ half-geodesics:~those curves which pass through the center of each face and perpendicularly through parallel edges.
 \end{enumerate}
\end{proposition}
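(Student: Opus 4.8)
The plan is to unfold the billiard dynamics into a straight-line flow and then analyze when such a straight line closes up and minimizes on half-intervals. First I would set up the standard \emph{unfolding} construction: a closed geodesic on $X_n$ corresponds to a straight line segment in the plane that, after reflecting the polygon successively across the edges it crosses, returns to its starting configuration. Since by the previous lemma a geodesic never hits a vertex, the sequence of reflections is well defined, and a closed geodesic of combinatorial length $m$ (i.e.\ crossing $m$ edges) corresponds to a chain of $m$ reflected copies $P_0, P_1, \dots, P_m$ of the regular $n$-gon with $P_m$ a translate of $P_0$ and the segment entering $P_0$ and exiting $P_m$ through corresponding points. A \emph{half-geodesic} is the special case where, additionally, the geodesic minimizes on every subinterval of length $L/2$; equivalently, for the point $\gamma(t)$ and its antipode $\gamma(t+L/2)$, the straight-line chord in the unfolding realizes the distance in $X_n$ between those two points.

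The key structural observation I would use is a symmetry/midpoint argument. For a half-geodesic the antipodal map $t \mapsto t + L/2$ must be realized by an isometry of $X_n$ that fixes the geodesic setwise; concretely, the portion of the unfolding from $\gamma(t)$ to $\gamma(t+L/2)$ and the portion from $\gamma(t+L/2)$ back to $\gamma(t)$ must be congruent half-chains, so the total number $m$ of edge crossings is even, say $m = 2\ell$, and there is a point-symmetry of the unfolded strip about the midpoint. I would then show that the midpoint of the geodesic segment sits at the center of a face (this is forced by the symmetry: the center is the unique fixed point of the relevant rotation by $\pi$), and similarly for the point half a period away; tracing back, the geodesic must cross each of its two ``endpoints'' perpendicularly through an edge, and the two edges must be parallel (again by the point-symmetry). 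This pins down the candidate curves exactly as in the statement: straight segments through the center of each face meeting parallel edges perpendicularly.

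Next I would do the counting and the parity split. When $n$ is even, opposite edges of the regular $n$-gon are parallel, and the perpendicular bisector of such a pair passes through the center; doubling it gives a genuine closed geodesic (it billiards perpendicularly off the two edges and retraces itself), and there are exactly $n/2$ such pairs of parallel edges, giving $n/2$ candidate curves. One must check (i) each of these is actually a geodesic — clear, since it is a straight segment on each face meeting the edge at a right angle, so the billiard law holds — and (ii) each is actually a $1/2$-geodesic, i.e.\ it minimizes on half-length subintervals; here I would compare the unfolded chord against the reflected-strip picture and use that the strip is thin enough (the relevant width is a single polygon, crossed perpendicularly) that the straight chord is genuinely the shortest connection in $X_n$. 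When $n$ is odd, no two edges are parallel, so the candidate configuration identified in the previous paragraph is impossible, and hence there are no half-geodesics; alternatively, one observes any closed geodesic must cross an even number of edges while returning, and for $n$ odd a short parity/angle computation (sum of reflection angles) shows the ``antipodal'' symmetry forcing perpendicular parallel endpoints cannot be met.

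The main obstacle I expect is step (ii): verifying that the obvious candidate curves for $n$ even genuinely minimize on \emph{every} half-length subinterval, not just on the symmetric one through the two centers. This requires controlling the distance function on $X_n$ along the curve — comparing the straight unfolded chord for an arbitrary pair $\gamma(t), \gamma(t+L/2)$ against all competing paths, including those that cross the edges in a different combinatorial pattern. I would handle this by a careful case analysis of which edges a competitor can cross, reducing to a finite planar geometry comparison using convexity of the $n$-gon and the fact that the geodesic passes through the face centers. The odd case, by contrast, I expect to be comparatively easy once the ``perpendicular through parallel edges'' normal form is established.
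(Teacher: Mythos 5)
This proposition is quoted from \cite{Ade} and the paper at hand does not reprove it, so your attempt has to be judged on its own terms; judged that way, its central step fails. You assert that for a half-geodesic the antipodal shift $t \mapsto t + L/2$ ``must be realized by an isometry of $X_n$'' fixing the curve, i.e.\ that the two halves are congruent half-chains with a point-symmetry about the midpoint. Nothing in the definition of a half-geodesic (minimization on subintervals of length $L/2$) gives you such a symmetry, and, worse, even granting it the desired normal form does not follow: the over-under curve on the doubled square is carried to itself by the rotation by $\pi$ about the center, and this rotation realizes exactly the shift by $L/2$, yet that curve is only a $1/4$-geodesic, crosses no edge perpendicularly, and does not pass through the face centers. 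So the symmetry you invoke is neither a consequence of being a half-geodesic nor sufficient to force ``perpendicular through parallel edges through the center,'' and with it collapses your derivation of the normal form and hence the odd case. Your unfolding setup is fine, but it never constrains the number of edge crossings, which is the actual content of the classification.

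The missing idea is the minimization constraint the present paper itself uses later (in the proof of Proposition 3.3): a geodesic on a doubled polygon never minimizes on an open segment whose interior contains two edge points, because replacing the piece between two consecutive crossings by its mirror image on the other face produces an equal-length competitor with a genuine corner, which can be strictly shortened near that corner (the space is flat across an edge). Applying this to the two open halves of a would-be half-geodesic, with the base point chosen off the edge set, shows the total number of edge crossings is at most two, hence (being even and positive) exactly two. A period-two closed geodesic projects to a single chord traversed once on each face; smooth closure at each endpoint forces the chord to meet both edges perpendicularly, so the two edges are parallel and the chord is the segment of midpoints through the center. For $n$ odd no two edges of a regular $n$-gon are parallel, giving part (1); for $n$ even this leaves exactly the $n/2$ candidates, and one must still carry out the verification you flag as step (ii) --- that each candidate minimizes on every half-length subinterval --- which can be done by an unfolding comparison in the spirit of the law-of-sines argument in the paper's Theorem 2.8. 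Without the two-crossing bound, your approach cannot exclude longer closed geodesics and the proposition does not follow.
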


%figure 4
\begin{figure}
	\begin{center}
		\begin{tikzpicture}[scale=1.2]
			\foreach \t in {0,...,3}{
				\draw[red, thick] ({cos(90*\t)}, {sin(90*\t)}) -
					- (0, 0);
				\draw[thick] ({sqrt(2)*cos(45+90*\t)}, {sqrt(2)*sin(45+90*\t)}) -
					- ({sqrt(2)*cos(45+90*(\t+1))}, {sqrt(2)*sin(45+90*(\t+1))});
			}
		\end{tikzpicture}\hfill
		\begin{tikzpicture}[scale=1.2]
			\foreach \t in {0,...,5}{
				\draw[red, thick] ({cos(60*\t)}, {sin(60*\t)}) -
					- (0, 0);
				\draw[thick] ({(1/cos(30))*cos(30+60*\t)}, {(1/cos(30))*sin(30+60*\t)}) -
					- ({(1/cos(30))*cos(30+60*(\t+1))}, {(1/cos(30))*sin(30+60*(\t+1))});
			}
		\end{tikzpicture}\hfill
		\begin{tikzpicture}[scale=1.2]
			\foreach \t in {0,...,7}{
				\draw[red, thick] ({cos(45*\t)}, {sin(45*\t)}) -
					- (0, 0);
				\draw[thick] 
					({(1/cos(22.5))*cos(22.5+45*\t)}, 
						{(1/cos(22.5))*sin(22.5+45*\t)}) -
					- ({(1/cos(22.5))*cos(22.5+45*(\t+1))}, 
						{(1/cos(22.5))*sin(22.5+45*(\t+1))});
			}
		\end{tikzpicture}\hfill
		\begin{tikzpicture}[scale=1.2]
			\foreach \t in {0,1,2}{
				\draw[thick] (\t*.5,0) circle (.5mm);
			}
			\draw[white] (0,-1) circle (.1mm);
			\draw[white] (0,1) circle (.1mm);
		\end{tikzpicture}
	\end{center}
	\caption{The $\frac{n}{2}$ half-geodesics on $X_n$, $n$ even. Note that we only depict one face of the doubled polygon, and that these geodesics are the concatenation of straight line paths on the top and bottom faces.}
\end{figure}
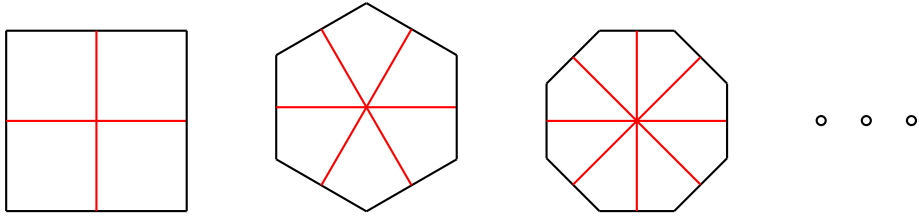

For $n$ odd, the result states that $X_n$ admits no half-geodesics. This naturally leads to the question of the smallest $k \in \mathbb{N}$ such that $X_n$ admits a $1/k$-geodesic. To examine this question we introduce the notion of an over-under curve on $X_n$.

\begin{definition}
Let $\gamma \colon S^1 \rightarrow X_n$ be the closed geodesic on the doubled regular n-gon that passes through the midpoints of adjacent edges of $X_n$. We call $\gamma$ an \emph{over-under} curve between adjacent edges on $X_n$.
\end{definition}

If $\gamma$ is an over-under curve and $\gamma(t_0)$, $\gamma(t_1)$, and $\gamma(t_2)$ are edge points of $X_n$ with the edge containing $\gamma(t_1)$ adjacent to the edges containing $\gamma(t_0)$ and $\gamma(t_2)$ then the following facts are immediate: 
\begin{enumerate}
\item $\gamma \vert_{(t_0,t_1)}$ and $\gamma \vert_{(t_1,t_2)}$ are on opposite faces of $X_n$
\item for every $t \in (t_0,t_1)$ and $s \in (t_1,t_2)$ the minimum path between $\gamma(t)$ and $\gamma(s)$ through the edge containing $\gamma(t_1)$ passes through the point $\gamma(t_1)$
\end{enumerate}

%figure 5
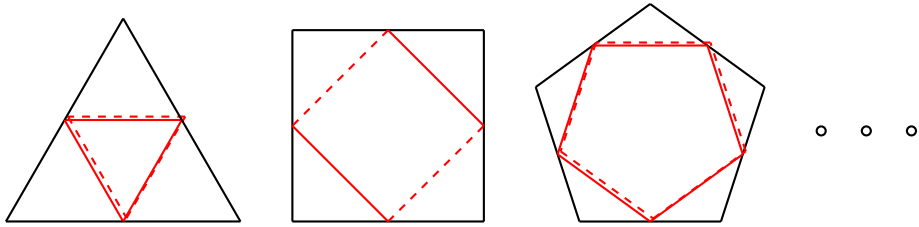
\begin{figure}
	\begin{center}
		\begin{tikzpicture}[scale=.9]
			\foreach \t in {0,...,2}{
				\draw[red, thick, dashed] ({cos(30+120*\t)+.05}, {sin(30+120*\t)+.05}) -
					- ({cos(30+120*(\t+1))+.05}, {sin(30+120*(\t+1))+.05});
			}
			\foreach \t in {0,...,2}{
				\draw[red, thick] ({cos(30+120*\t)}, {sin(30+120*\t)}) -
					- ({cos(30+120*(\t+1))}, {sin(30+120*(\t+1))});
				\draw[thick] ({(1/cos(60))*cos(90+120*\t)}, {1/cos(60))*sin(90+120*\t)}) -
					- ({(1/cos(60))*cos(90+120*(\t+1))}, {(1/cos(60))*sin(90+120*(\t+1))});
			}
		\end{tikzpicture}\hfill
		\begin{tikzpicture}[scale=.9]
			\foreach \t in {0,...,3}{
				\draw[thick] ({(1/cos(60))*cos(45+90*\t)}, {1/cos(60))*sin(45+90*\t)}) -
					- ({(1/cos(60))*cos(45+90*(\t+1))}, {(1/cos(60))*sin(45+90*(\t+1))});
			}
			\foreach \t in {0,2}{
				\draw[red, thick] 
					({(cos(45)/cos(60))*cos(90*\t)}, 
						{(cos(45)/cos(60))*sin(90*\t)}) -
					- ({(cos(45)/cos(60))*cos(90*(\t+1))},
						 {(cos(45)/cos(60))*sin(90*(\t+1))});
				\draw[red, thick, dashed] 
					({(cos(45)/cos(60))*cos(90*(\t+1))}, 
						{(cos(45)/cos(60))*sin(90*(\t+1))}) -
					- ({(cos(45)/cos(60))*cos(90*(\t+2))},
						 {(cos(45)/cos(60))*sin(90*(\t+2))});
			}
		\end{tikzpicture}\hfill
		\begin{tikzpicture}[scale=.8]
			\foreach \t in {0,...,4}{
				\draw[thick] ({2*cos(18+72*\t)}, {2*sin(18+72*\t)}) -
					- ({2*cos(18+72*(\t+1))}, {2*sin(18+72*(\t+1))});
			}
			\foreach \t in {0,...,4}{
				\draw[red, thick] 
					({2*cos(36)*cos(54+72*\t)}, 
						{(2*cos(36)*sin(54+72*\t)}) -
					- ({2*cos(36)*cos(54+72*(\t+1))},
						 {2*cos(36)*sin(54+72*(\t+1))});
				\draw[red, thick, dashed] 
					({2*cos(36)*cos(54+72*(\t+1))+.05}, 
						{2*cos(36)*sin(54+72*(\t+1))+.05}) -
					- ({2*cos(36)*cos(54+72*(\t+2))+.05},
						 {2*cos(36)*sin(54+72*(\t+2))+.05});
			}
		\end{tikzpicture}\hfill
		\begin{tikzpicture}[scale=1.2]
			\foreach \t in {0,1,2}{
				\draw[thick] (\t*.5,0) circle (.5mm);
			}
			\draw[white] (0,-1) circle (.1mm);
			\draw[white] (0,1) circle (.1mm);
		\end{tikzpicture}
	\end{center}
	\caption{ Over-under curves on $X_n$. Note that we now depict as solid the segments of the geodesic on the top face, and as dashed the segments on the bottom face.}
\label{asdf}
\end{figure}

The over-under curves on $X_n$ exhibit distinct behavior depending on the parity of $n$. If $n$ is even, the curves close smoothly after $n$ segments. If $n$ is odd, the curves close after $n$ segments, but not smoothly. The $1^{st}$ and $n^{th}$ segments are on the same face of $X_n$, thus forming a corner when they meet at an edge. The curve needs $2n$ segments before closing smoothly, so that the $1^{st}$ and $2n^{th}$ segments are on opposite faces (see Figure~\ref{asdf}). The following theorem states that the minimizing index of the over-under curves equals the number of segments.

\begin{theorem} 
Let $\gamma \colon S^1 \rightarrow X_n$ be an over-under curve between adjacent edges on a doubled regular $n$-gon. 
\begin{enumerate} 
  \item If $n$ is even then $\gamma$ is a $1/n$-geodesic.
  \item If $n$ is odd then $\gamma$ is a $1/2n$-geodesic.
\end{enumerate}
\end{theorem}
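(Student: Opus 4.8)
The plan is to show that the over-under curve $\gamma$ minimizes on every subinterval of length $L/k$ (with $k=n$ for $n$ even, $k=2n$ for $n$ odd), and that this is sharp by exhibiting a subinterval slightly longer than $L/k$ on which it fails to minimize. Since $\gamma$ consists of $k$ congruent segments, each joining the midpoints of two adjacent edges, a subinterval of length $L/k$ is (up to reparametrization) one that starts at a parameter $t$ somewhere along segment $i$ and ends at the corresponding parameter along segment $i+1$; by the rotational symmetry of $X_n$ it suffices to analyze one such "sliding window" spanning two consecutive segments and sharing the common endpoint $\gamma(t_1)$, the midpoint of the middle edge. So the crux is: \emph{for all $t\in(t_0,t_1)$ and $s\in(t_1,t_2)$, is $\gamma|_{[t,s]}$ a shortest path from $\gamma(t)$ to $\gamma(s)$, where $s-t = L/k$?}

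First I would set up the unfolding picture. By the billiard description established before the theorem, a candidate shortest path from $\gamma(t)$ to $\gamma(s)$ either stays on the two faces used by $\gamma$ (crossing the middle edge) or takes a different combinatorial route through other edges/vertices. For the first case, fact (2) in the list preceding the theorem tells us that any path through the middle edge from $\gamma(t)$ to $\gamma(s)$ has length at least $d(\gamma(t),\gamma(t_1))+d(\gamma(t_1),\gamma(s))$, and since $\gamma$ runs straight on each face this lower bound equals the length of $\gamma|_{[t,s]}$; hence $\gamma|_{[t,s]}$ beats every competitor crossing the middle edge. The real work is ruling out competitors that go "the other way around" — paths that leave $\gamma(t)$ heading away from the middle edge and wrap around through the remaining $k-2$ edges. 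Unfolding the doubled polygon along that alternate sequence of edges produces a Euclidean development in which the competing path is a straight segment; I would compute its length and compare to the length $2\rho$ of $\gamma|_{[t,s]}$, where $\rho = d(\gamma(t),\gamma(t_1)) = d(\gamma(t_1),\gamma(s))$ and the two segments of $\gamma$ meet at $\gamma(t_1)$ at the billiard angle.

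The key geometric computation: in the unfolding, $\gamma$ develops to a straight line of length $2\rho$ (it is a geodesic crossing only the middle edge), while the alternate route develops to the chord of a "fan" of $k-2$ reflected copies of the polygon wrapped around the far side. The worst case for us is when the window is centered, $t_1$ at its midpoint; one computes the angle subtended and checks that the straight-line $\gamma$-development is shorter than the going-around chord precisely because $k-2$ copies subtend a total turning angle that, in the unfolding, keeps the going-around endpoint farther away. I expect this to reduce to an inequality of the form $2\rho \le (\text{length of the long chord})$, which for $n$ even at $k=n$ and for $n$ odd at $k=2n$ holds with equality exactly at the window endpoints — i.e., when $\gamma(t)$ and $\gamma(s)$ themselves become edge midpoints at distance $L/k$ apart — and is strict in the interior. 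That equality-at-the-boundary is what forces $k$ to be the minimizing index and not anything smaller: pushing the window to length $(1+\varepsilon)L/k$ makes the going-around development strictly shorter, so $\gamma$ is not a $1/k'$-geodesic for any $k'<k$.

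Finally I would handle the parity bookkeeping. For $n$ even the curve closes after $n$ segments and is smooth, so $k=n$ windows tile $S^1$ consistently and the argument above applies verbatim; the answer $1/n$ drops out. For $n$ odd the curve only closes smoothly after $2n$ segments, so $L$ is the length of all $2n$ segments and $L/k = L/(2n)$ is one segment-to-segment window; the same unfolding estimate applies, now with $2n-2$ reflected copies in the fan. The one subtlety on the odd side is that the "seam" where the $1^{st}$ and $n^{th}$ segments lie on the same face creates a kink in the underlying $n$-segment trace, but after $2n$ segments there is no kink, and since we only ever compare over windows of length $L/(2n)$ — strictly shorter than the distance to any kink of the $n$-fold trace — the kink never obstructs minimization; this is exactly why the index jumps from $n$ to $2n$. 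The main obstacle I anticipate is the going-around length computation in the unfolding: getting the fan angle and chord length right, and verifying the inequality is tight precisely at the window endpoints, rather than any conceptual difficulty.
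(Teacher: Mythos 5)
The reduction to a sliding window of length $L/k$ spanning two consecutive segments with the edge point $\gamma(t_1)$ in its interior is correct, and invoking fact (2) does dispose of every competitor that crosses the edge containing $\gamma(t_1)$. The genuine gap is in your classification of the remaining competitors: you assert that any other path must ``wrap around through the remaining $k-2$ edges,'' and your key (and only sketched) computation — the fan of $k-2$ reflected copies — addresses only that long route. But the critical competitor does neither. Writing $p_i=\gamma(t_1)$ and letting $p_{i+1}$ be the next edge point of $\gamma$, a path can leave $q_1=\gamma(t)$ on the top face, run straight to a point of the edge containing $p_{i+1}$ (adjacent to the edge containing $p_i$), cross it once, and run straight to $q_2=\gamma(s)$ on the bottom face. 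This is a local corner-cutting shortcut, not a wrap-around; its length can be arbitrarily close to $L/k$, so no crude lower bound dismisses it, and it is exactly the case to which the paper devotes its entire proof: reflect $q_2$ across that adjacent edge to $r_2$, let $c$ be the intersection of $\overline{q_1 r_2}$ with the $\gamma$-segment $\overline{p_i p_{i+1}}$, and apply the law of sines to $\triangle q_1 c p_i$ and $\triangle r_2 c p_{i+1}$ (using $|q_1p_i|=|r_2p_{i+1}|$ and $\angle p_i=\pi-\angle p_{i+1}$) to obtain that the competitor has length $P_i+P_{i+1}\geq Q_1+R_2=l(\gamma)/n$, the discrepancy being governed by $\cos(\angle c)\leq 1$. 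Without an argument of this type for the single-crossing adjacent-edge competitor, your proposal does not establish the minimizing property.

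Two smaller points. First, the theorem only asserts that $\gamma$ is a $1/n$- (resp. $1/2n$-) geodesic, so the sharpness discussion (failure on windows of length $(1+\varepsilon)L/k$) is not needed; moreover your claim about where equality occurs pertains to the competitor family you did not analyze. Second, the paths that genuinely admit an easy lower bound of $l(\gamma)/n$ are those through edges \emph{not} adjacent to the one containing $p_i$ (together with vertex paths, excluded by the lemma); the paper handles the odd case from the even one by reparameterization, which is simpler and cleaner than your kink discussion, though your conclusion there is essentially right.
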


\begin{proof} We prove the theorem for $n$ even and note that the proof of the odd case is equivalent after a reparameterization of the curve. Start by parameterizing $\gamma$ by a circle of length $2\pi$ so that each edge point is given by $p_i = \gamma (2\pi i /n)$. To prove the theorem we show that $\gamma$ is the minimizing path between any pair of points $q_1 = \gamma(t)$ and $q_2 = \gamma (t + 2\pi / n)$. First note that if the $q_j$ are edge points then $\gamma$ is indeed the minimizing path, as $\gamma$ is a straight line path on a single face of $X_n$. Otherwise the $q_j$ are on opposite faces and the segment of $\gamma$ connecting the pair contains an edge point $p_i$. Any shorter path between the $q_j$ must cross an edge distinct from the edge containing $p_i$. It is only necessary to consider paths through the edges containing $p_{i \pm 1}$ as we can easily provide a lower bound of $l(\gamma)/n$ for the length of paths through other edges. Without loss of generality we consider only those paths through the edge containing $p_{i+1}$. 

%figure 6
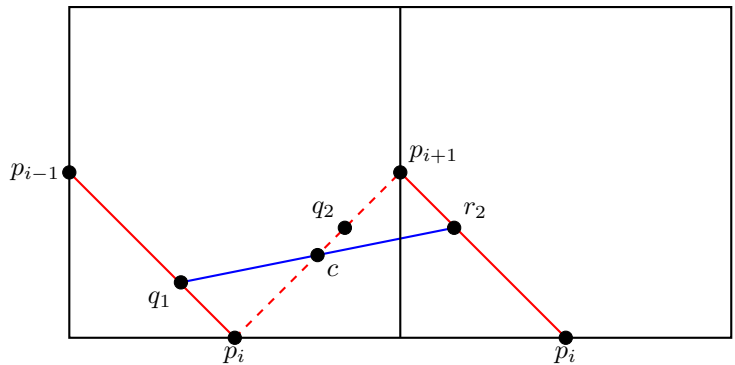
\begin{figure}
	\begin{center}
		\begin{tikzpicture}[scale=1.1]
			\draw[red, thick] (0,2) -- (2,0);
			\draw[red, thick] (4,2) -- (6,0);
			\draw[blue, thick, variable=\t, domain=-1.651:1.651] plot ({\t+3},{1+.2*\t});
			\draw[red, thick, dashed] (2,0) -- (4,2);
			\draw[thick] (0,0) rectangle (8,4);
			\draw[thick] (4,0) -- (4,4);
			\draw[fill=black] (0,2) circle (.8mm) node[left] {$p_{i-1}$};
			\draw[fill=black] (4,2) circle (.8mm) node[above right] {$p_{i+1}$};
			\draw[fill=black] (2,0) circle (.8mm) node[below] {$p_i$};
			\draw[fill=black] (6,0) circle (.8mm) node[below] {$p_i$};
			\draw[fill=black] (3,1) circle (.8mm) node[below right] {$c$};
			\draw[fill=black] ({3-1.651},{1+.2*(-1.651)}) 
				circle (.8mm) node[below left] {$q_1$};
			\draw[fill=black] ({3+1.651},{1+.2*(1.651)}) 
				circle (.8mm) node[above right] {$r_2$};
			\draw[fill=black] ({3+.2*1.651},{1+.2*(1.651)}) 
				circle (.8mm) node[above left] {$q_2$};
		\end{tikzpicture}
	\end{center}
	\caption{The over-under curve on $X_4$.}  
  \label{sq_fig}
\end{figure}

By reflecting the doubled polygon over the edge containing $p_{i+1}$ and considering the top and bottom faces as part of the same plane we are able to complete the proof in the Euclidean setting. Assume $q_1$ is on the top face and let $r_2$ denote the reflection of $q_2$ through the edge containing $p_{i+1}$ (see Figure~\ref{sq_fig}). We show that the straight line path between $q_1$ and $r_2$ has length at least $l(\gamma)/n$. Let $c$ be the point of intersection between the line segments $\overline{q_1 r_2}$ and $\overline{p_i p_{i+1}}$. Consider the pair of triangles $\triangle q_1 c p_i$ and $\triangle r_2 c p_{i+1}$. By construction we have that the sides opposite $\angle c$ in each triangle have equal length so that applying law of sines to both triangles yields 

\[\frac{\sin(\angle q_1)}{Q_1} = \frac{\sin(\angle p_i)}{P_i} = \frac{\sin(\angle c)}{C} =  \frac{\sin(\angle r_2)}{R_2} = \frac{ \sin(\angle p_{i+1})}{P_{i+1}}\]
where we have used a capital letter to denote the length of the side opposite its angle. We note that $\angle p_i = \pi - \angle p_{i+1}$ so that $\angle r_2 = \angle p_i - \angle c$ and
\[\frac{\sin(\pi - \angle c - \angle p_i)}{Q_1} = \frac{\sin(\angle p_i)}{P_i} = \frac{\sin(\angle p_i - \angle c)}{R_2} = \frac{\sin(\pi - \angle p_i)}{p_{i+1}} \]
Via the trigonometric identity $\sin(\pi-x) = \sin(x)$ we have that $P_i =  P_{i+1}$ and
\begin{align*}
\frac {Q_1 + R_2}{2 P_i} &= \frac{\sin(\angle p_i - \angle c) + \sin(\angle p_i + \angle c)}{2 \sin(\angle p_i)} = \frac {2\sin(\angle p_i)\cos(\angle c)}{2 \sin(\angle p_i)} \\ 
&= \cos(\angle c) \leq 1
\end{align*}
We have therefore shown that $ 2 P_i = P_i + P_{i+1} \geq Q_1 + R_2 = l(\gamma)/n$ and conclude that $\gamma$ minimizes on all subintervals of length $ l(\gamma)/n$. 
\end{proof}

\section{Bounding the minimizing index}

We have shown for $n$ odd that $X_n$ admits a $1/2n$-geodesic by explicitly constructing such curves. We now consider whether these curves realize the optimal minimizing property on $X_n$, i.e.~if $k=2n$ is the smallest $k \in \mathbb{N}$ for which $X_n$ ($n$ odd) admits a $1/k$-geodesic. To quantify this notion Sormani introduced the minimizing index.
  
\begin{definition}[\cite{Sor}, Definition 3.3] The minimizing index of a metric space $M$, denoted minind(M),  is the smallest $k \in \mathbb{N}$ such that the metric space admits a $1/k$-geodesic.
\end{definition}

For $n$ odd the results of the previous section give an upper bound of $2n$ on $minind(X_n)$. Furthermore, we have seen that such $X_n$ do not admit half-geodesics and consequently that $2 < minind(X_n) \leq 2n$. A natural question is whether we can sharpen this bound on the minimizing index of $X_n$. Given a doubled prime-gon it is compelling to believe that its minimizing index is 2p.

\begin{conjecture}\label{conj}
  If p is an odd prime, then $minind(X_p) = 2p$.
\end{conjecture}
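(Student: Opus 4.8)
Since the Theorem of the previous section produces a $1/2p$-geodesic on $X_p$ (the odd case of the over-under curve), we already have $\mathrm{minind}(X_p)\le 2p$, so the plan is to prove the matching lower bound $\mathrm{minind}(X_p)\ge 2p$. Writing $\mathrm{minind}(\gamma)$ for the least $k$ making a closed geodesic $\gamma$ a $1/k$-geodesic, it is equivalent to show $\mathrm{minind}(\gamma)\ge 2p$ for every closed geodesic $\gamma$ on $X_p$, and it suffices to treat primitive ones since an iterate only increases the minimizing index. I would split this into a geometric estimate and a combinatorial one.

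\emph{Geometric step.} First I would show that any closed geodesic $\gamma$ on $X_p$ with $N$ billiard segments --- the maximal arcs lying on a single face, delimited by the edge crossings $p_1,\dots,p_N$ --- satisfies $\mathrm{minind}(\gamma)\ge N$. Take a subinterval running from a point $x$ interior to segment $j$ to a point $y$ interior to segment $j+2$, so that it contains exactly the consecutive crossings $p_j$ and $p_{j+1}$. Because crossing an edge switches faces, segments $j$ and $j+2$ lie on the same face $F$, so $x$ and $y$ both lie on the convex polygon $F$ and are joined there by a straight chord; since the straight segment from $p_j$ to $p_{j+1}$ has the same length drawn on either face, the triangle inequality inside $F$ shows this chord is no longer than the subarc of $\gamma$ from $x$ to $y$, and is \emph{strictly} shorter unless $\gamma$ meets both of the edges carrying $p_j$ and $p_{j+1}$ perpendicularly --- which would force those two edges to be parallel, impossible on a regular $p$-gon with $p$ odd (the same obstruction behind the absence of half-geodesics). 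Hence $\gamma$ fails to minimize on every subinterval containing two consecutive crossings, so it minimizes only on subintervals shorter than its shortest segment $s_{\min}$, whence $\mathrm{minind}(\gamma)\ge L(\gamma)/s_{\min}=\big(\sum_j s_j\big)/s_{\min}\ge N$.

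\emph{Combinatorial step.} It then remains to prove that every primitive closed geodesic on $X_p$, $p$ an odd prime, has $N\ge 2p$ segments --- a bound attained by the over-under curve. By the lemma that geodesics avoid vertices, such a geodesic is a vertex-free periodic billiard trajectory in the regular $p$-gon; since a closed geodesic on the doubled polygon must return to the same face and each reflection switches face, a trajectory of period $m$ (its number of reflections) is traversed once when $m$ is even and twice when $m$ is odd, giving $N=m$ or $N=2m$. The claim is thus equivalent to the nonexistence of a trajectory of even period below $2p$ or of odd period below $p$. After relabeling the $p$ edges so that edge $j$ has direction $\pi j/p$ and following the reflection law, a periodic trajectory crossing edges $j_1,\dots,j_N$ must satisfy the closing relation $\sum_t(-1)^t j_t\equiv 0\pmod{p}$ on these directions, together with the condition that the trajectory literally close up and the realizability constraints (consecutive edges distinct, all segments of positive length, no vertex in the interior); I would combine these with the primality of $p$ to exclude the short periods, the expectation being that the trajectory must cross each of the $p$ edges at least twice, which yields $N\ge 2p$ at once.

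The combinatorial step is the main obstacle, and it is where primality is indispensable: for even $n$ the half-geodesics already violate the analogous bound, and for odd composite $n$ one expects short geodesics that ``factor through'' a proper divisor of $n$, so no purely formal argument can succeed --- the closing relation by itself has short solutions, and the content is exactly that none of them is realized by an honest billiard path in the regular $p$-gon. I would expect to need the special structure of the regular $p$-gon billiard (its Veech/lattice property, or a direct study of the reflection unfolding) to control which edge sequences actually occur, and I would begin by verifying $p=3,5,7$ by hand, enumerating the finitely many candidate short sequences and checking non-realizability, both as evidence for the conjecture and to isolate the precise feature of primality the general proof must exploit.
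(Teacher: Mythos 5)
The statement you are trying to prove is Conjecture~\ref{conj}, which the paper explicitly leaves open: the authors prove only the case $p=3$ (Proposition~\ref{three}) and cite a subsequent result that $\mathrm{minind}(X_p)\to\infty$, so there is no proof in the paper to compare against, and your proposal does not close the gap either --- you yourself flag the ``combinatorial step'' as unproven and only indicate what you would expect to need. Your geometric step is sound and is exactly the observation the paper makes inside the proof of Proposition~\ref{three} (a geodesic never minimizes on an open subinterval containing two edge points, so the period, i.e.\ the number of segments, lower-bounds the minimizing index); your justification via the impossibility of two perpendicular crossings of non-parallel edges is a reasonable way to get the required strict inequality.

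The fatal problem is the combinatorial step itself: the claim that every primitive closed geodesic on $X_p$ has at least $2p$ segments is \emph{false}, already for $p=3$. The paper classifies period-four closed geodesics on $X_3$ (Figure~\ref{triangle2}): such a curve leaves an edge at angle $\pi/6$, crosses the two other edges perpendicularly, and closes up after four segments, crossing one edge twice and the other two once each. So $N=4<6=2p$, and the chain ``$\mathrm{minind}(\gamma)\ge N\ge 2p$'' cannot be completed; your expectation that every edge must be crossed at least twice is refuted by this example. The paper instead handles these short-period geodesics by a separate, direct argument showing that a period-four geodesic on $X_3$, despite having only four segments, still has minimizing index at least six --- it exhibits two equal-length paths between a point $Q$ and its image at parameter distance $L/6$, one through $P_3$ and one through the foot $R$ of a perpendicular, so the curve cannot minimize past length $L/6$. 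Any attack on the conjecture must therefore do more than count segments: it must bound the minimizing index of geodesics whose period is \emph{smaller} than $2p$, which is precisely why the authors note that this case-by-case analysis becomes untenable as $p$ grows and why the conjecture remains open.
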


Observe here that the primality of $p$ is necessary, since if we have that $n=kp$ with $k\geq2$, we can construct a $1/2p$-geodesic by creating an over under curve between the midpoints of every $k$th edge of $X_n$. Evidence towards this conjecture begins with the following:

\begin{proposition}\label{three}
The conjecture is true for the case $p=3$, i.e.~the minimizing index of the doubled regular triangle is six. 
\end{proposition}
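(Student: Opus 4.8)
The plan is to show that $\mathrm{minind}(X_3) > 4$ by ruling out the existence of a $1/k$-geodesic on the doubled triangle for every $k \in \{3,4\}$; combined with the upper bound $\mathrm{minind}(X_3) \le 6$ from the previous section and the fact that $X_3$ (being a doubled odd-gon) admits no half-geodesic, this forces $\mathrm{minind}(X_3) = 6$. So the real content is: \emph{the doubled regular triangle has no $1/3$-geodesic and no $1/4$-geodesic.} Since a $1/k$-geodesic is in particular a $1/k'$-geodesic for every $k' \ge k$... wait, the implication runs the other way — a $1/k$-geodesic minimizes on intervals of length $L/k$, hence also on the shorter intervals of length $L/k'$ for $k' \ge k$, so it is a $1/k'$-geodesic. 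Thus it suffices to rule out a $1/4$-geodesic on $X_3$, since existence of a $1/3$-geodesic would imply existence of a $1/4$-geodesic (and a half-geodesic is already excluded, covering $k=2$; $k=1$ would mean globally minimizing, impossible for a closed geodesic by the discussion in the introduction).

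First I would classify all closed geodesics on $X_3$ up to the symmetries of the doubled triangle. By the billiard description and the vertex-avoidance lemma (\cite{Ade}, Lemma 2.1), a closed geodesic corresponds to a periodic billiard trajectory in the equilateral triangle that never hits a corner. Unfolding the equilateral triangle tiles the plane, and periodic billiard paths correspond to straight lines of rational slope (with respect to the triangular lattice) avoiding all lattice points; the combinatorial type is captured by how many times the trajectory crosses each of the three families of edges per period. A closed geodesic on $X_3$ is built from an even number of such billiard segments (it must return to the same face), and its length $L$ and its "cutting sequence" of edges are the data I need. The key quantitative step is then: for each such geodesic, locate the pair of points $q_1 = \gamma(t)$, $q_2 = \gamma(t + L/4)$ that is "worst" for the minimizing property — typically the pair symmetric about an edge crossing or a vertex-grazing point — unfold along the relevant edges as in the proof of the Theorem above, and exhibit a competitor path (reflecting through a \emph{different} edge, exactly as the blue segment in Figure~\ref{sq_fig}) that is strictly shorter. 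The law-of-sines / $\cos(\angle c) \le 1$ computation from the even case shows that the naive competitor through an adjacent edge is never shorter; so the obstruction must come from a non-adjacent edge or a near-vertex configuration, and that is precisely where the odd-gon differs from the even-gon.

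Concretely, I expect the argument to reduce to analyzing the shortest closed geodesics first — the over-under curve on $X_3$ (which has $2n = 6$ segments and is a $1/6$-geodesic by the Theorem, and one checks directly it is \emph{not} a $1/5$- or $1/4$-geodesic by producing the shortcut) and any other short periodic billiard path of comparable length — and then arguing that longer geodesics fail even more badly: a geodesic with many segments has some sub-arc of length $L/4$ that wanders across several edges and can always be shortcut. Formalizing "longer geodesics fail more badly" uniformly is the main obstacle. One clean way around it: show that any $1/4$-geodesic on $X_3$ would have to be "taut" in a strong sense (each length-$L/4$ sub-arc is the unique minimizer between its endpoints), derive rigid constraints on the angles at which it meets the edges (the reflection-through-an-adjacent-edge computation must hold with equality-adjacent slack, i.e. $\angle c$ must stay near $0$ on a whole interval of choices of $q_1$), and show these constraints are incompatible with closing up after an even number of segments in a triangle — essentially an angle-chasing contradiction in the unfolded picture. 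The hardest sub-step is bounding the finitely many "short" cases that the uniform argument does not dispatch; I would handle those by explicit coordinates in the unfolded equilateral-triangle tiling, which is a finite (if tedious) check.
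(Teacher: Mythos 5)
There are two genuine gaps. First, a logical one: ruling out $1/k$-geodesics for $k\in\{2,3,4\}$ only gives $\mathrm{minind}(X_3)\geq 5$, and combined with the upper bound from the over-under curve this yields $\mathrm{minind}(X_3)\in\{5,6\}$, not $=6$. You correctly note that a $1/k$-geodesic is a $1/k'$-geodesic for $k'\geq k$, but that implication runs the wrong way for your reduction: nonexistence of $1/4$-geodesics does \emph{not} imply nonexistence of $1/5$-geodesics (rather the reverse). What must actually be excluded is a $1/5$-geodesic; the cases $k=2,3,4$ then follow for free. Your plan never touches $k=5$.

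Second, the step you yourself flag as ``the main obstacle'' --- showing uniformly that geodesics with many segments cannot be $1/4$- (or $1/5$-) geodesics --- is exactly the part that needs an idea, and your proposed ``taut''/angle-chasing scheme is not an argument yet. The paper dispatches this with one simple observation that your proposal is missing: define the \emph{period} of a closed geodesic on a doubled polygon as its number of straight segments; the period is always even (smooth closing), and since no subarc containing two or more edge points in its interior can be minimizing, the minimizing index of a geodesic is at least its period. Hence every geodesic with period $\geq 6$ is automatically harmless, and both the $k=5$ and $k\leq 4$ questions reduce to the finitely describable period-$2$ and period-$4$ geodesics. Period $2$ is excluded (no parallel edges / no half-geodesics), and the period-$4$ geodesics on $X_3$ can be classified explicitly (leave an edge at angle $\pi/6$, cross the two other edges perpendicularly); for these an elementary similar-triangles computation exhibits two equal-length paths between a point $Q$ and its partner at parameter distance $L/6$ (the competitor crossing a different edge), so their minimizing index is at least six. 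Your instinct to unfold and produce competitor paths for the short cases is in the same spirit as this last step, but without the period bound (or some substitute) and without treating $k=5$, the proof does not close.
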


\begin{proof}
We first define the \emph{period} of a closed geodesic on a doubled polygon to be its total number of segments. As these geodesics must close smoothly, we have that the period is always even. Also note because a geodesic on a doubled polygon will never minimize on an open segment that contains multiple edge points, that the period provides a lower bound on the minimizing index of a geodesic (the smallest $k \in \mathbb{N}$ such that it is a 1/k-geodesic).

We have therefore reduced the problem to showing that those closed geodesics with period less than six have minimizing index at least six. We have already established that $X_3$ does not admit half-geodesics, and that the period must be even, so we need only consider those closed geodesics with period four. Such curves can be classified:~they must leave an edge with angle $\pi/6$, traverse an adjacent edge perpendicularly, return to the starting edge (at the same point, but not with the same velocity), traverse the remaining edge perpendicularly, and return to the starting point to close up smoothly (see Figure~\ref{triangle2}). 

%figure 7
\begin{figure}
	\begin{center}
		\begin{tikzpicture}[scale=3.4]
			\foreach \t in {0,...,2}{
				\draw[thick] ({cos(90+120*\t)}, {sin(90+120*\t)}) -
					- ({cos(90+120*(\t+1))}, {sin(90+120*(\t+1))});
			}
			\draw[thick, variable=\t, domain=0:.65] plot ({\t},{-sin(30)+(tan(30)*\t)});
			\draw[thick, variable=\t, domain=-.65:0] plot ({\t},{-sin(30)-(tan(30)*\t)});
			\draw[thick, dashed] 
				({cos(30)/2},{-sin(30)+tan(30)*cos(30)/2}) -
					- ({cos(30)/2},{-sin(30)});
			\draw[thick, dashed] 
				({cos(30)/2},{-sin(30)+tan(30)*cos(30)/2}) -
					- ({cos(330)},{sin(330)});
			\draw[fill=black] 
				({cos(210)+tan(60)*cos(60)/4},{sin(210)+tan(60)*sin(60)/4}) 
					circle (.2mm) node[above left] {$P_1$};
			\draw[fill=black] 
				(0,{-sin(30)}) circle (.2mm) node[below] {$P_2$};
			\draw[fill=black] 
				({cos(330)-tan(60)*cos(60)/4},{sin(330)+tan(60)*sin(60)/4}) 
					circle (.2mm) node[above right] {$P_3$};
			\draw[fill=black] 
				({cos(30)/2},{-sin(30)}) circle (.2mm) node[below] {$R$};
			\draw[fill=black] 
				({cos(30)/2},{-sin(30)+tan(30)*cos(30)/2}) 
					circle (.2mm) node[above left] {$Q$};
			\draw[fill=black] 
				({cos(330)},{sin(330)}) circle (.2mm) node[below right] {$V$};
		\end{tikzpicture}
	\end{center}
	\caption{Closed geodesic on $X_3$ with period four and minimizing index six.}
  \label{triangle2}
\end{figure}
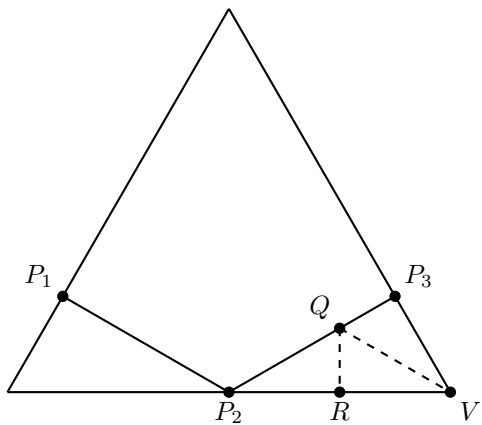

It remains to show that any period four geodesic on $X_3$ has minimizing index at least six. We first show that the period four geodesic from Figure~\ref{triangle2} has minimizing index at least six. In this figure $QV$ is the bisector of angle $V$ and $QR$ is perpendicular to $VP_2$. Using properties of similar triangles we have that $|QR|=|QP_3|=|P_2P_3|/3 = L/12$. This demonstrates that there exist two equal length paths between $Q$ and its corresponding point on the bottom face:~one along our geodesic through $P_3$, and another through $R$. The geodesic therefore can not minimize beyond this segment of length $L/6$, and we conclude that the minimizing index must be at least six. For a period four geodesic on $X_3$ that does not contain the midpoint of an edge, a similar argument shows that the minimizing index must be strictly greater than six. 
\end{proof}

Please note that Proposition~\ref{three} did not appear in the original version of this paper. The proof was sketched by the undergraduate research group \cite{aahs} and independently by one of the referees (who also produced Figure~\ref{triangle2}). The original paper had an argument equivalent to the last paragraph of the proof showing that the minimizing index of the geodesic from Figure~\ref{triangle2} is at least six, but did not classify all period four geodesics, and therefore did not determine the minimizing index of $X_3$. 

It is reasonable to believe that a similar argument could be used to show that $minind(X_5) = 10$. It need only be shown that closed geodesics of period four, six, or eight have minimizing index at least ten. One quickly realizes that this direction of reasoning will prove untenable for resolving the conjecture; as $p$ grows it becomes prohibitively difficult to complete such an analysis. As a partial solution to the conjecture we present the following:

\begin{theorem}[\cite{aahs}, Theorem 2] 
For $p$ prime, as $p \to \infty$, the minimizing index of $X_p$ grows without bound. 
\end{theorem}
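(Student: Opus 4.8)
The plan is to show that a $1/k$-geodesic on $X_p$ forces $k$ to be large by bounding the length of the shortest segment (the shortest straight piece between consecutive edge crossings) from below. The key observation is that a closed geodesic on $X_p$ unfolds, via repeated reflection across edges, into a straight line in the plane that visits a sequence of copies of the regular $p$-gon. If $\gamma$ is a $1/k$-geodesic of length $L$, then $\gamma$ minimizes on every subinterval of length $L/k$; in particular every segment of $\gamma$ must have length at least $L/k$, since a segment lies on a single face and is genuinely minimizing only up to the next edge point, so $\gamma$ has at most $k$ segments and period at most $k$. Conversely — and this is the useful direction — if $\gamma$ has a short segment, or more precisely if two points of $\gamma$ lying a parameter-distance $L/k$ apart admit a strictly shorter connecting path, then $\gamma$ fails to be a $1/k$-geodesic. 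So the strategy is: assume $X_p$ admits a $1/k$-geodesic $\gamma$ with $k$ bounded (say $k \le K$), hence $\gamma$ has period at most $K$, and derive a contradiction once $p$ is large enough relative to $K$.

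**First I would** set up the unfolding carefully. A closed geodesic of period $2m$ (period is even by the smoothness-at-closing argument already in the paper) unfolds to a line segment in the plane crossing $2m$ edges of a chain of reflected $p$-gons, returning to the starting configuration. Because the geodesic billiards with equal angles of incidence and reflection, the direction of the unfolded line is constant, while the edges it crosses have normals taking only finitely many directions — multiples of $\pi/p$ — as the $p$-gon is reflected around. The crossing angles the geodesic makes with successive edges are therefore constrained to a finite set determined by $p$ and the (fixed) direction of $\gamma$. The heart of the matter is that for the geodesic to close up after only $2m \le K$ segments, the total turning accumulated by the sequence of edge-normals must be a multiple of $2\pi$ realized in few steps; a counting/pigeonhole argument on the edge directions mod $2\pi/p$ should show that a short-period closed geodesic must repeat a configuration and hence, by a Heron-type shortest-path comparison (exactly as in the proof of the main theorem, using the law of sines on the two triangles formed at an edge crossing), contains a pair of points at parameter-distance $L/k$ joined by a strictly shorter path through a neighboring edge. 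The cosine factor $\cos(\angle c) \le 1$ appearing in that computation becomes a strict inequality $\cos(\angle c) < 1$ precisely when the geodesic is not one of the special perpendicular/over-under configurations, and those special configurations force $n = p$ to be divisible by a small integer, impossible for large primes.

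**The main obstacle** I expect is making the "short period forces a bad pair of points" step quantitative and uniform in $p$. It is not enough to know the period is small; one must locate an explicit pair $q_1 = \gamma(t)$, $q_2 = \gamma(t + L/k)$ and an explicit competitor path beating the geodesic, and the competitor's length deficit must be controlled independently of $p$ — or at least shown to be positive. The natural approach is a compactness/limiting argument: suppose for contradiction there is a sequence $p_j \to \infty$ and $1/k_j$-geodesics $\gamma_j$ on $X_{p_j}$ with $k_j \le K$ bounded; pass to a subsequence so $k_j \equiv k$ is constant and the combinatorial type of $\gamma_j$ (the sequence of reflections in the unfolding) stabilizes; rescale so each $p_j$-gon has, say, inradius $1$, so that locally $X_{p_j}$ looks like a doubled half-plane (a flat strip) in the limit. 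In the limit the geodesic becomes a closed billiard path of period $\le K$ in a degenerate polygon, and one checks directly that such a limit path fails the $1/k$-minimizing property strictly; transferring this strict failure back to large finite $p_j$ via continuity gives the contradiction. The delicate points are (i) ensuring the rescaled geodesics do not escape to infinity or collapse — controlled by the period bound, since a period-$\le K$ geodesic has length $O(K)$ in the normalized metric — and (ii) handling the boundary case where the limiting competitor path has the same length as the limiting geodesic, which is where one must return to the actual finite $p_j$ and extract the $O(1/p_j)$ correction with the correct sign from the law-of-sines identity, showing $\cos(\angle c)$ is bounded away from $1$ by a quantity independent of $j$. Once this uniform strict deficit is in hand, every sufficiently large prime $p$ has $minind(X_p) > K$, and since $K$ was arbitrary, $minind(X_p) \to \infty$.
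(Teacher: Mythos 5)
The paper does not actually prove this theorem: it is quoted from the preprint \cite{aahs}, and the text only says that the proof there ``involves a careful study of the closed geodesics on doubled polygons,'' so there is no in-paper argument to measure you against. Judged on its own terms, your proposal is a plausible skeleton (bound the period of a $1/k$-geodesic, then show bounded-period geodesics on $X_p$ cannot be $1/k$-geodesics once $p$ is large), but the central step is missing rather than proved. The pigeonhole claim that a short-period closed geodesic ``must repeat a configuration and hence \dots contains a pair of points at parameter-distance $L/k$ joined by a strictly shorter path'' is only asserted, and the law-of-sines computation you invoke cannot supply it as stated: in the paper that computation bounds competitors through the two \emph{adjacent} edges and yields the inequality in the direction of \emph{minimization} ($Q_1+R_2\le 2P_i$), whereas here you need to exhibit a competitor that is \emph{strictly shorter} than the geodesic arc, possibly through a non-adjacent edge, for an arbitrary bounded-period geodesic whose combinatorics you have not classified. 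This is exactly the content of the theorem, and it is the part \cite{aahs} had to develop new techniques for; compare Proposition~\ref{three}, where even the single case $p=3$ required classifying all period-four geodesics and constructing an explicit equal-length competitor.

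The compactness argument you lean on to close this gap also fails at its key claim. Normalizing the inradius, the doubled $p$-gons converge to the doubled \emph{disk} (not a doubled half-plane), and bounded-period geodesics subconverge to closed billiard trajectories there; but it is not true that ``such a limit path fails the $1/k$-minimizing property strictly.'' For instance, a doubled diameter of the doubled disk \emph{is} a half-geodesic: for two points symmetric about the center at distance $r-s$ from it, any path through a boundary point $z$ has length $|p-z|+|q-z|\ge 2r$, since the circle lies outside the ellipse with foci $p,q$ and major axis the diameter, with equality only at the two diameter endpoints. So the limit object can be a $1/k$-geodesic with $k$ small, the ``strict failure in the limit'' vanishes, and everything is thrown onto your item (ii) --- extracting an $O(1/p)$ length deficit of the correct sign at finite $p$ --- which is precisely the unproved crux, not a delicate detail. (A minor point as well: your justification that each segment has length at least $L/k$ because a segment ``is genuinely minimizing only up to the next edge point'' is wrong --- the over-under curve minimizes across an edge point; the correct statement, used in Proposition~\ref{three}, is that no geodesic minimizes past \emph{two} edge points, which is what gives the period bound.) As it stands, then, the proposal is a program with the decisive step unestablished, not a proof.
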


This theorem was proved after the completion of this paper by a subsequent undergraduate research group \cite{aahs} working again with the first named author. The proof involves a careful study of the closed geodesics on  doubled polygons, developing new techniques to study their minimizing properties. To the best of our knowledge Conjecture~\ref{conj} remains open, and we invite the reader to pursue their own investigations.

\section{Acknowledgements} The authors would like to thank the Faculty Research Committee at Trinity College for funding the second named author's on-campus research with the first named author through the Student Research Program. We also acknowledge the wonderful work of Brett C.~Smith who recreated all the figures in this paper for publication.

%\section*{References}

\end{document}